\numberwithin{equation}{section} %% Comment out for sequentially-numbered
\numberwithin{figure}{section} %% Comment out for sequentially-numbered
\theoremstyle{plain}
\newtheorem*{thm*}{Theorem}
\theoremstyle{plain}
\newtheorem{thm}{Theorem}[section]
\theoremstyle{definition}
\newtheorem{defn}[thm]{Definition}
\theoremstyle{plain}
\newtheorem{lem}[thm]{Lemma}
\theoremstyle{plain}
\newtheorem{prop}[thm]{Proposition}
\theoremstyle{plain}
\theoremstyle{remark}
\theoremstyle{remark}
\theoremstyle{remark}
\newtheorem*{acknowledgement*}{Acknowledgement}
\begin{document}

\title[An Algebraic Characterisation for Finsler Metrics of Constant Flag Curvature]{An Algebraic
  Characterisation for Finsler Metrics of Constant Flag Curvature}

\author[Bucataru]{Ioan Bucataru}
\address{Faculty of  Mathematics \\ Alexandru Ioan Cuza University \\ Ia\c si, 
  Romania}
\email{bucataru@uaic.ro}
\urladdr{http://www.math.uaic.ro/\textasciitilde{}bucataru/}
\author[Fodor]{Dan Gregorian Fodor}
\address{Faculty of  Mathematics \\ Alexandru Ioan Cuza University \\ Ia\c si, 
  Romania}
\email{dan.fodor52@yahoo.com}

\date{\today}

\begin{abstract}
In this paper we prove that a Finsler metrics has
constant flag curvature if and only if the curvature of the induced
nonlinear connection satisfies an algebraic identity with
respect to some arbitrary second rank tensors. Such algebraic
identity appears as an obstruction to the formal integrability of some
operators in Finsler geometry, \cite{BM11, GM00}. This algebraic characterisation for
Finsler metrics of constant flag curvature allows to provide yet another proof for the Finslerian version of
Beltrami's Theorem, \cite{BC18, BC19}. 
\end{abstract}

\subjclass[2000]{53C60, 53B40, 58B20}

\keywords{Finsler spaces, constant flag curvature, Bianchi identities,
  Beltrami Theorem}

\maketitle

\section{Introduction}
Riemannian metrics of constant curvature are well understood and
classified. However, one still can find new characterisations
that shed new light on this topic. In this work we consider a 
connected manifold, with $\dim M >2$. 
In \cite{MM19}, the following characterisation is proposed.

\textit{A manifold is a constant curvature manifold if and only if
  \begin{eqnarray} b_{im}R^m_{ljk}+
    b_{km}R^m_{lij}+b_{jm}R^m_{lki}=0, \label{eq:acr}\end{eqnarray} for all symmetric
  tensors $b$.}

The geometric motivation of this characterisation is as follows,
\cite{MM12}. If $X$ and $Y$ are two eigenvectors of a symmetric
tensor $b$ that satisfies \eqref{eq:acr}, then the curvature operator
preserves the subspace generated by he bivector $X\wedge Y$. Since at any point $p\in
M$, the sectional curvature, in the direction of the plane
$P=\operatorname{span}\{X,Y\}\subset T_pM$,  can be viewed as
\begin{eqnarray*}
\kappa_p ( X\wedge Y)  =\frac{R_p(X,Y,X,Y)}{\|X\|_p^2\|Y\|_p^2-<X,Y>_p^2} 
                       = \frac{<X\wedge Y, R(X\wedge Y)>_p}{\|X\wedge Y\|_p^2},
\end{eqnarray*}
 it follows that for such eigenvectors, $\kappa$ does not depend on
 $X\wedge Y$. Having enough symmetric tensors satisfying
 \eqref{eq:acr}, we obtain that $\kappa$ is independent of any
 $2$-plane and using Schur's lemma we have that the sectional curvature is
 constant.

This algebraic characterisation for Riemannian metrics of constant
curvature can be used to provide a very simple and direct proof of
Beltrami's Theorem. If two Riemannian metrics are projectively
equivalent, then their curvature tensors are related by 
\begin{eqnarray*}
\widetilde{R}^h_{ijk} = R^h_{ijk} + \psi_{ij}\delta^h_k -
  \psi_{ik}\delta^h_j, 
\end{eqnarray*} for some symmetric tensor $\psi_{ij}$.
Since $\psi_{ij}\delta^h_k - \psi_{ik}\delta^h_j$ automatically satisfies
\eqref{eq:acr} it follows that $\widetilde{R}$ satisfies
\eqref{eq:acr} if and only $R$ satisfies \eqref{eq:acr}.

In this paper we provide a similar algebraic characterisation for Finsler metrics of
constant flag curvature. For a manifold $M$, we consider $(TM, \pi, M)$ its
tangent bundle and denote by $T_0M=TM\setminus \{0\}$, the tangent space with
the zero section removed. We note that the second iterated tangent
bundle has two vector bundle structures over $TM$, $(TTM, \tau, TM)$
and $(TTM, D\pi, TM)$. 

In Finsler geometry, most of the geometric structures live either on
$TM$ or $T_0M$. The tangent structure (or vertical endomorphism) is
the $(1,1)$-type tensor field $J$ on $TM$ defined  as
\begin{eqnarray*}
J(u)=\left(\tau(u)+ t D\pi(u)\right)'(0), \forall u \in TM.
\end{eqnarray*}

For a Finsler metric $F$, we denote by $S\in {\mathfrak X}(T_0M)$ its geodesic spray. We
consider the geometric setting induced by $S$, \cite{Grifone72, SLK14}, with $h$ the horizontal
projector on $T_0M$ and 
\begin{eqnarray*}
R=\frac{1}{2}[h,h], \end{eqnarray*}
 the curvature (Fr\"olicker-Nijenhuis) tensor of the horizontal
 distribution. 

The vector valued, semi-basic $2$-form $R$ induces an algebraic
derivation $i_R$ of degree $1$. We are interested on its action on the
space of $2$-forms, given by $i_R: \Lambda^2(T_0M) \to
\Lambda^3(T_0M)$, 
\begin{eqnarray*}
i_R\omega(X,Y,Z)=\omega(R(X,Y),Z) + \omega(R(Z,X),Y) +
  \omega(R(Y,Z),X), \forall X,Y,Z \in {\mathfrak X}(T_0M).
\end{eqnarray*} 

The Hessian of the energy of a Finsler metric $F$ gives a symmetric,
second rank positive-definite tensor that can be used to define scalar
products on any tensor space on $T_0M$.  

The flag curvature of a Finsler metric $F$, in the direction of the
flagpole $y$ and the tangent plane $P=\operatorname{span}\{y,X\}\subset T_xM$, can be defined as 
\begin{eqnarray}
\kappa_{(x,y)} (y\wedge X)  = \frac{<y\wedge X, y\wedge R(y\wedge
  X)>_{(x,y)}} {\|y\wedge X\|_{(x,y)}^2}. \label{eq:fc}
\end{eqnarray}  
A Finsler metric has \textit{scalar flag curvature} if the flag curvature
$\kappa$ does not depend on the flag $P$ and it has \textit{constant flag
curvature} if the function $\kappa$ is constant. In view of formula \eqref{eq:fc}, we can see that a Finsler metric has
scalar (constant) flag curvature if and only if $y\wedge R(y\wedge
X)=\kappa y\wedge X$, for some function (constant) $\kappa$ and any flag
$P=\operatorname{span}\{y,X\}$. 

In Finsler geometry there are various characterisations for metrics of constant flag
curvature using some Weyl-type curvature tensors \cite{AZ06, BC18, SM86}. In this
paper we will prove the following algebraic characterisation for
Finsler metrics of constant flag curvature.
\begin{thm} \label{thm:acf}
A Finsler metric is of constant flag curvature if and only if 
\begin{eqnarray}
i_R\omega=0,  \quad \forall \omega \in \Lambda^2(T_0M), \textrm{ satisfying
  \ } i_J\omega=0.\label{eq:acf}
\end{eqnarray}
\end{thm}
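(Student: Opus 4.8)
The plan is to convert \eqref{eq:acf} into a pointwise algebraic identity on the curvature $R$ --- the exact Finsler counterpart of \eqref{eq:acr} --- and then to recognise that identity as constant flag curvature. I use the horizontal/vertical splitting $TT_0M = HT_0M\oplus VT_0M$ induced by $S$, with adapted frame fields $\delta/\delta x^i$ (horizontal) and $\partial/\partial y^i$ (vertical) normalised by $J(\delta/\delta x^i)=\partial/\partial y^i$, $J(\partial/\partial y^i)=0$, and I write $R(\delta/\delta x^i,\delta/\delta x^j)=R^m_{ij}\,\partial/\partial y^m$. The first step is to describe the $2$-forms killed by $i_J$. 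Evaluating $i_J\omega(X,Y)=\omega(JX,Y)+\omega(X,JY)$ on frame fields shows that $i_J\omega=0$ holds precisely when $\omega$ vanishes on pairs of vertical vectors and the mixed components $b_{ij}:=\omega(\partial/\partial y^i,\delta/\delta x^j)$ are symmetric in $i$ and $j$, the purely horizontal components of $\omega$ being unrestricted; conversely, every symmetric semi-basic $(0,2)$-tensor $b$ arises in this way, so in \eqref{eq:acf} one may let $\omega$ run exactly over these test forms.

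Next I compute $i_R\omega$ for such an $\omega$. Since $R=\tfrac12[h,h]$ is semi-basic and vertical-valued --- it vanishes as soon as a vertical vector is inserted, and its values are vertical --- every term of $i_R\omega$ containing a vertical entry drops out, so $i_R\omega$ is determined by
\[
i_R\omega\!\left(\tfrac{\delta}{\delta x^i},\tfrac{\delta}{\delta x^j},\tfrac{\delta}{\delta x^k}\right)=R^m_{ij}b_{mk}+R^m_{ki}b_{mj}+R^m_{jk}b_{mi},
\]
the horizontal components of $\omega$ playing no role. Hence \eqref{eq:acf} is equivalent to the Finslerian analogue of \eqref{eq:acr},
\[
R^m_{ij}b_{mk}+R^m_{ki}b_{mj}+R^m_{jk}b_{mi}=0\qquad\text{for every symmetric }(0,2)\text{-tensor }b,
\]
an identity to be read pointwise on $T_0M$. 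One direction is already visible: if $R^m_{ij}=\delta^m_i\theta_j-\delta^m_j\theta_i$ for some $\theta$, then using $b_{ij}=b_{ji}$ the cyclic sum collapses to zero.

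To solve this identity I would mimic the Riemannian argument of \cite{MM12}, now for a three-index curvature. Testing against $b=\xi\otimes\xi$ --- which by polarisation loses nothing --- turns it into $\rho\wedge\xi=0$ for the $2$-form $\rho_{ij}:=R^m_{ij}\xi_m$, so $\rho$ is divisible by $\xi$ for every covector $\xi$; feeding this back --- and here the hypothesis $\dim M>2$ is essential --- forces $R^m_{ij}=\delta^m_i\theta_j-\delta^m_j\theta_i$ for a (uniquely determined) semi-basic $1$-form $\theta$. Contracting with $y^i$ expresses the Jacobi endomorphism $\Phi$, $\Phi^m_j=y^iR^m_{ij}$ (that is, $\Phi=i_SR$), as $\Phi^m_j=y^m\theta_j-\theta(y)\,\delta^m_j$, and its self-adjointness with respect to the Finsler fundamental tensor $g$ then forces $\theta_j=\mu\,y_j$, with $y_j:=g_{jk}y^k$ and $\mu$ a $0$-homogeneous function; thus $R^m_{ij}=\mu(\delta^m_i y_j-\delta^m_j y_i)$ --- the curvature of a metric of scalar flag curvature, the flag curvature being $-\mu$ up to normalisation --- and by \eqref{eq:fc} $y\wedge R(y\wedge X)$ is a function multiple of $y\wedge X$. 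At this point \eqref{eq:acf} is equivalent to $F$ having scalar flag curvature.

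It remains to improve ``scalar'' to ``constant'', which --- unlike in the Riemannian setting --- is not automatic; this is where the Bianchi identities $[h,R]=0$ and $[J,R]=0$ enter, automatic consequences of $R=\tfrac12[h,h]$ and of the canonical connection of the spray (and the obstructions to formal integrability alluded to in the abstract, \cite{GM00,BM11}). From the identity $3R^m_{ij}=\partial\Phi^m_j/\partial y^i-\partial\Phi^m_i/\partial y^j$, which recovers $R$ from its Jacobi endomorphism for any spray, \cite{Grifone72,SLK14}, and the $0$-homogeneity of $\mu$, one obtains $\partial\mu/\partial y^i=0$, so $\mu=\mu(x)$; and unfolding $[h,R]=0$ for $R^m_{ij}=\mu(x)(\delta^m_i y_j-\delta^m_j y_i)$ forces $\delta\mu/\delta x^i$ to be proportional to $y_i$, which for a function of $x$ alone --- and again using $\dim M>2$ --- is possible only if $\mu$ is constant; hence $F$ has constant flag curvature. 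Together with the elementary converses recorded above this proves the theorem. (Alternatively, one may observe that \eqref{eq:acf} forces the projective Weyl curvature to vanish and invoke the known characterisations of constant flag curvature \cite{AZ06,BC18,SM86}.) The main obstacle is the algebraic step of the third paragraph --- the Finsler counterpart of the linear-algebra lemma of \cite{MM12}, for which $\dim M>2$ is needed --- followed by the passage from scalar to constant flag curvature, which forces one to bring in the homogeneity and the Bianchi identities of the underlying spray.
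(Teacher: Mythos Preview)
Your argument is correct and follows the same overall arc as the paper: reduce \eqref{eq:acf} to the local identity $b_{im}R^m_{jk}+b_{km}R^m_{ij}+b_{jm}R^m_{ki}=0$ for all symmetric $b$, solve it to obtain $R^m_{ij}=\delta^m_i\theta_j-\delta^m_j\theta_i$ (equivalently $R=\theta\wedge J$), and then use the metric together with the two Bianchi identities to upgrade this to constant flag curvature. The paper isolates the last part as a standalone lemma ($R=\xi\wedge J$ for some semi-basic $1$-form $\Leftrightarrow$ constant flag curvature); your use of the self-adjointness of $\Phi$, of $3R=[J,\Phi]$, and of $[h,R]=0$ is exactly the content of the paper's use of $d_RF^2=0$, $[J,R]=0$, and $[h,R]=0$, just phrased in coordinates rather than via exterior calculus.

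The one genuine tactical difference is in the algebraic step. You test against rank-one tensors $b=\xi\otimes\xi$, deduce that the $2$-form $R^m_{ij}\xi_m$ is divisible by $\xi$ for every $\xi$, and then infer the form of $R$ (the ``feeding this back'' step, which is correct but left implicit and does require $n>2$). The paper instead observes that the identity says precisely that the $(2,3)$-tensor $\delta^s_iR^l_{jk}+\delta^s_kR^l_{ij}+\delta^s_jR^l_{ki}$ is skew in $s,l$; a single contraction $s=i$ then gives $(n-1)R^l_{jk}=(n-1)(\xi_j\delta^l_k-\xi_k\delta^l_j)$ directly, with $(n-1)\xi_k:=R^s_{sk}$. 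That trace argument is shorter and avoids the divisibility discussion altogether; your route works, but you would want to spell out the passage from ``divisible by every $\xi$'' to the explicit form of $R$.
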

As we will see in the proof of Theorem \ref{thm:acf}, the condition \eqref{eq:acf} can be written locally as 
\begin{eqnarray*}
b_{im}R^m_{jk} + b_{km}R^m_{ij} + b_{jm}R^m_{ki}=0, \quad \forall
  b_{ij} \textrm{ symmetric}. \end{eqnarray*}  

The condition \eqref{eq:acf} appears as a first order obstruction to
the formal integrability of the projective metrizability problem
\cite[Theorem 4.3]{BM11}, and as a second order obstruction to
the formal integrability of the Euler-Lagrange operator \cite[\S 5.2]{GM00}.  

We will use the algebraic characterisation from Theorem \ref{thm:acf} to provide a new proof for the
Finslerian version of Beltrami's theorem studied in \cite{BC18, BC19}.

\section{Finsler Metrics and their curvature tensors}

In this work, we consider $M$ a smooth, $n$-dimensional, connected
manifold, with $n>2$. We denote by $(TM, \pi, M)$ the tangent bundle, while
$T_0M=TM\setminus \{0\}$ denotes the tangent space with the zero section
removed. We will use $(x^i)$ to denote local coordinates on $M$ and
$(x^i, y^i)$ for the induced local coordinates on $TM$. 

The canonical submersion $\pi$ induces a regular, $n$-dimensional,
integrable distribution $VTM=\operatorname{Ker}(D\pi)$, which is called the
\textit{vertical distribution}. There is a canonical vertical vector field
${\mathcal C}=y^i\partial/\partial y^i$, called the \textit{Liouville
  vector field}, or the dilation vector field.  The vertical endomorphism has the following local expression
$J=dx^i\otimes \partial/\partial y^i$.    

In this work, we use the Fr\"olicker-Nijenhuis theory of derivations
as it is developed in \cite[Ch. 2]{GM00}. For a vector valued
$k$-form $K$, we denote by $i_K$ the $i_*$-derivation of degree $k-1$, 
and by $d_K$ the $d_*$-derivation of degree $k$. For two vector valued $k$
and $l$-forms $K$ and $L$, we consider the Fr\"olicker-Nijenhuis
bracket $[K,L]$, which is a vector valued $(k+l)$-form.

\begin{defn} \label{dfn:fm} A Finsler metric is a continuous, positive
  function $F:TM \to {\mathbb R}$ that satisfies:
\begin{itemize}
\item[i)] $F$ is smooth on $T_0M$;
\item[ii)] $F$ is positively homogeneous in the fiber coordinates:
  $F(x,\lambda y)=\lambda F(x,y)$, $\forall \lambda >0$;
\item [iii)] the Hessian of the energy function: 
\begin{eqnarray}
g_{ij}(x,y)=\frac{1}{2}\frac{\partial^2F^2}{\partial y^i\partial
  y^j}(x,y) \label{eq:gij}
\end{eqnarray}
is non-degenerate.
\end{itemize}
\end{defn}
The homogeneity condition ii) of a Finsler metric implies, using
Euler's Theorem, that
\begin{eqnarray*}
F^2(x,y)=g_{ij}(x,y)y^iy^j, \quad g_{ij}y^j=\frac{1}{2}\frac{\partial
  F^2}{\partial y^i}.
\end{eqnarray*}
The regularity condition iii) from the definition of a Finsler metric
assures that $dd_JF^2$ is a symplectic structure on $T_0M$. Therefore,
there is a unique vector field $S\in {\mathfrak X}(T_0M)$, satisfying 
\begin{eqnarray}
i_Sdd_JF^2 = -dF^2. \label{eq:ges} \end{eqnarray} 
$S$ is called the \textit{geodesic spray} of the Finsler metric, and it is
given locally by
\begin{eqnarray*}
S=y^i\frac{\partial}{\partial x^i}-2G^i\frac{\partial}{\partial y^i}.
\end{eqnarray*}
 To the geodesic spray we associate the horizontal projector, \cite{Grifone72}, 
\begin{eqnarray}
h=\frac{1}{2}\left(\operatorname{Id}-
  [S,J]\right)=\left(\frac{\partial}{\partial x^i}
  -N^j_i(x,y)\frac{\partial}{\partial y^j}\right)\otimes dx^i, \quad
  N^i_j=\frac{\partial G^i}{\partial y^j}. \label{eq:h}
\end{eqnarray}
The equation \eqref{eq:ges} that uniquely gives the geodesic spray $S$
of a Finsler metric $F$ is equivalent to the following equation:
\begin{eqnarray}
d_hF^2=0. \label{eq:dhf}
\end{eqnarray}
The image of the horizontal projector $h$, $HTM$, is a regular
$n$-dimensional distribution that is supplementary to the vertical
distribution $VTM$. The obstruction to the integrability of the horizontal
distribution is given by the curvature tensor:
\begin{eqnarray*} 
R=\frac{1}{2}[h,h]=R^i_{jk}(x,y)\frac{\partial}{\partial y^i}\otimes
  dx^j\wedge dx^k.
\end{eqnarray*}
In this work we will use the following properties of the curvature tensor $R$:
\begin{itemize}
\item[$R_1)$] Vector-valued, semi-basic $2$-form:
  $R(X,Y)=-R(Y,X)=R(hX, hY)$, $\forall X, Y \in {\mathfrak X}(T_0M)$;
\item[$R_2)$] Satisfies first Bianchi identity: $[J,R]=0$; 
\item[$R_3)$] Satisfies second Bianchi identity: $[h,R]=0$. 
\end{itemize}
In Finsler geometry, there are some other useful curvature
tensors. One is the Jacobi endomorphism, $\Phi=v\circ [S,h]$, connected to the curvature
tensor $R$ by the following formulae:
\begin{eqnarray*}
 \Phi=i_SR, \quad 3R=[J, \Phi].
\end{eqnarray*}

 Similar to the notion of sectional curvature from Riemannian
 geometry, in Finsler geometry we have the concept of flag
 curvature. For $(x,y)\in T_0M$, consider the $2$-dimensional plane $P\subset T_xM$,
 $P=\operatorname{span}\{y, X\}$. The \textit{flag curvature} of the flag
 $\{P, y\}$ can be defined as, \cite{Sh01}, 
\begin{eqnarray}
\kappa_{(x,y)}(y\wedge X)=\frac{X^ig_{il}R^l_{jk}y^jX^k}{g_{ij}y^iy^jg_{ij}X^iX^j
  - (g_{ij}y^iX^j)^2} = \frac{<y\wedge X, y\wedge R(y\wedge
  X)>_{(x,y)}} {\|y\wedge X\|_{(x,y)}^2}. \label{eq:fc1}
 \end{eqnarray}
We say that a Finsler metric $F$ has \textit{scalar flag curvature} (SFC) if
the flag curvature $\kappa=\kappa(x,y)$ does not depend on the flag
$P$, and it has \textit{constant flag curvature} (CFC)  if
the flag curvature $\kappa$ is a constant. In recent years many
geometers obtained new characterisations for Finsler metrics of constant flag
curvature, \cite{AZ06, BC18, SM86}, using some Weyl-type curvature tensors. 

\section{Proof of Theorem \ref{thm:acf}}

In this section, we will provide the proof of Theorem
\ref{thm:acf} using the following lemma that
characterises Finsler metrics of constant flag curvature.

\begin{lem} \label{lem:xicfc} A Finsler metric has constant flag curvature
  if and only if there exists a semi-basic $1$-form $\xi$ such that 
\begin{eqnarray}
R=\xi\wedge J. \label{eq:xicfc}
\end{eqnarray}
\end{lem}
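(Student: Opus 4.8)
The plan is to prove both implications of Lemma~\ref{lem:xicfc} by exploiting the characterisation of (scalar and) constant flag curvature recorded after \eqref{eq:fc1}, namely that $F$ has constant flag curvature $\kappa$ if and only if $y\wedge R(y\wedge X)=\kappa\, y\wedge X$ for all $X$, together with the algebraic structure of semi-basic forms. Recall $R$ is vector-valued, semi-basic, and skew ($R_1$), so locally $R=R^i_{jk}\,\partial/\partial y^i\otimes dx^j\wedge dx^k$; the ansatz $R=\xi\wedge J$ with $\xi=\xi_i\,dx^i$ semi-basic means $R^i_{jk}=\xi_j\delta^i_k-\xi_k\delta^i_j$ (up to the usual normalisation), which is exactly the ``isotropic'' form of the curvature. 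I would set up the correspondence between the invariant statement $R=\xi\wedge J$ and this coordinate identity first, so that the rest of the argument can be run either invariantly or in coordinates as convenient.

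For the ``if'' direction, assume $R=\xi\wedge J$. First I would pin down $\xi$: contracting with $S$ and using $\Phi=i_SR$ together with $J(S)=\mathcal C$ and $\xi(S)=\xi_jy^j$, one gets $\Phi=(\xi\circ S)\,J-\xi\otimes\mathcal C$ type expression, and homogeneity of $R$ (it is homogeneous of degree $0$ as a tensor, i.e.\ its components $R^i_{jk}$ are homogeneous of degree $1$ in $y$... more precisely $[\mathcal C,R]=R$) forces $\xi$ to be homogeneous of degree $1$. Then I would compute $y\wedge R(y\wedge X)$ directly: $R(y,X)=\xi(y)X-\xi(X)y$ (reading $R(\cdot,\cdot)$ off $\xi\wedge J$), so $y\wedge R(y,X)=\xi(y)\,y\wedge X$, giving scalar flag curvature with $\kappa=\xi(y)/F^2$ after the normalisation in \eqref{eq:fc1}. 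To upgrade from scalar to constant I would invoke the second Bianchi identity $[h,R]=0$ ($R_3$): substituting $R=\xi\wedge J$ into $[h,R]=0$ and using $[h,J]$-type identities from the Grifone setting yields a differential constraint on $\xi$ — concretely $d_h\xi=0$ plus homogeneity — which combined with the Finslerian Schur lemma forces $\kappa$ to be constant. This is the Finsler analogue of the Riemannian Schur argument, and I expect the identity $[h,\xi\wedge J]=(d_h\xi)\wedge J-\xi\wedge[h,J]$ (with $[h,J]$ expressible via $R$ and torsion-freeness) to be the technical crux.

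For the ``only if'' direction, assume constant flag curvature $\kappa$. The defining relation $y\wedge R(y\wedge X)=\kappa\,y\wedge X$ says that for each fixed $(x,y)$ the endomorphism $X\mapsto R(y,X)=\Phi(X)$ (this is just $\Phi=i_SR$) satisfies $\Phi(y)=0$ and $\Phi(X)\equiv\kappa F^2 X$ modulo the line spanned by $y$, i.e.\ $\Phi$ acts as $\kappa F^2$ on $T_xM/\langle y\rangle$. Writing this out, $\Phi=\kappa F^2(\mathrm{Id}_v-?)$ — more precisely $\Phi^i_j=\kappa(F^2\delta^i_j-y_iy^j)$ type expression with $y_i=g_{ij}y^j$ — so $\Phi$ has the required rank-one-correction form, which means $\Phi=\eta\wedge J$-compatible data for a suitable semi-basic $1$-form. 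Then I would recover $R$ from $\Phi$ via $3R=[J,\Phi]$: applying $[J,\cdot]$ to the explicit $\Phi$ and using $[J,J]=0$, $[J,\mathcal C]=J$, and that $\kappa$ is constant (so $d_J\kappa=0$), the bracket collapses to something of the form $\xi\wedge J$ with $\xi=\frac{\kappa}{3}d_J F^2=\frac{2\kappa}{3}\,y_i dx^i$ (again up to normalisation), which is manifestly a semi-basic $1$-form. The main obstacle here is the careful bookkeeping in evaluating $[J,\Phi]$ for a $\Phi$ that is not itself of the pure form $\xi\wedge J$ but has the extra $y_iy^j$ piece; one must check that the unwanted terms cancel precisely because $g_{ij}y^j=\tfrac12\partial_i F^2$ and because of homogeneity, and it is exactly the constancy of $\kappa$ (versus mere scalar flag curvature) that makes the leftover $d_J\kappa$ terms vanish so that $R$ lands in the image of $\xi\wedge J$. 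I would therefore present the ``only if'' direction in coordinates, establish the identity for $\Phi$, and then derive \eqref{eq:xicfc} either by the invariant formula $3R=[J,\Phi]$ or by direct computation of $R^i_{jk}$.
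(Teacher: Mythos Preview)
Your ``only if'' direction is essentially the paper's argument: from constant flag curvature $\kappa$ derive the explicit form of the Jacobi endomorphism $\Phi^l_k=\kappa(F^2\delta^l_k-y_ky^l)$, then recover $R$ via $3R=[J,\Phi]$ to get $R^l_{jk}=\kappa(y_j\delta^l_k-y_k\delta^l_j)$, i.e.\ $R=\xi\wedge J$ with $\xi=\tfrac{\kappa}{2}d_JF^2$. (Your normalisation $\kappa/3$ is off, but as you note this is cosmetic.)

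Your ``if'' direction, however, has a genuine gap. You correctly extract scalar flag curvature $\kappa=\xi(S)/F^2$ from $R=\xi\wedge J$, and you correctly anticipate that the second Bianchi identity $[h,R]=0$ yields $d_h\xi=0$. But you then appeal to ``homogeneity plus the Finslerian Schur lemma'' to upgrade to constant curvature, and this is where the argument breaks down. Scalar flag curvature alone does \emph{not} imply constant flag curvature in Finsler geometry; the Finslerian Schur lemma requires the additional hypothesis $d_J\xi=0$, which you never establish. The paper obtains this from the \emph{first} Bianchi identity $[J,R]=0$: substituting $R=\xi\wedge J$ gives $d_J\xi\wedge J=0$, and tracing yields $(n-2)d_J\xi=0$. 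You invoke only the second Bianchi identity.

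There is also a missing structural step you will need. Knowing $d_h\xi=0$ and $d_J\xi=0$ for an arbitrary semi-basic $\xi$ does not directly give $d_h\kappa=0$ and $d_J\kappa=0$ for $\kappa=\xi(S)/F^2$; one first has to show that $\xi$ is proportional to $d_JF^2$. The paper does this via the metric compatibility $d_hF^2=0$: then $0=d_h^2F^2=d_RF^2=\xi\wedge d_JF^2$, forcing $\xi=\tfrac{\kappa}{2}d_JF^2$. Only after this identification do $d_J\xi=0$ and $d_h\xi=0$ translate into $d_J\kappa=0$ and $d_h\kappa=0$, which together give $\kappa$ constant. Your outline skips this proportionality argument entirely, and your black-box invocation of ``Schur'' is essentially circular, since (as the paper remarks) this lemma \emph{is} the Finslerian Schur lemma once reformulated.
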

\begin{proof}
Assume that the Finsler metric $F$ has constant flag curvature
$\kappa$. Then, we can rewrite formula \eqref{eq:fc1} as follows:
\begin{eqnarray*}
g_{il}R^l_{jk}y^jX^iX^k = \kappa \left( F^2g_{ik} - g_{il}y^l
  g_{jk}y^j\right) X^iX^k, \forall X^i, X^k.
\end{eqnarray*}
Above formula implies 
\begin{eqnarray}
g_{il}R^l_{jk}y^j = \kappa \left( F^2g_{ik} - g_{il}y^l
  g_{jk}y^j\right), \label{eq:fc2}
\end{eqnarray}
since both sides are symmetric second rank tensors. We denote by
$R^l_{0k}:=R^l_{jk}y^j$, the components of the Jacobi endomorphism
$\Phi=i_SR$. In terms of the Jacobi endomorphism, formula
\eqref{eq:fc2} can be expressed as follows
\begin{eqnarray}
R^l_{0k}=\kappa\left(F^2\delta^l_k - g_{jk}y^j y^l\right). \label{eq:cfcphi}
\end{eqnarray}
Using the fact that one can recover the curvature
tensor from the Jacobi endomorphism, $R=[J,\Phi]/3$, we get from
formula \eqref{eq:cfcphi}, the following expression for the curvature
tensor:
\begin{eqnarray*}
R^l_{jk}=\frac{1}{3}\left(\frac{\partial R^l_{0k}}{\partial y^j}
  - \frac{\partial R^l_{0j}}{\partial y^k}  \right) = \kappa \left(g_{sj}y^s
  \delta^l_k - g_{sk}y^s \delta^j_l\right). 
\end{eqnarray*}
If we consider the semi-basic $1$-form $\xi=\kappa d_JF^2/2 =\kappa
Fd_JF = \kappa g_{is}y^s dx^i$, then the curvature tensor $R$ is given
by formula \eqref{eq:xicfc}.

We assume now that for the curvature tensor $R$ there is a semi-basic
$1$-form $\xi$ that satisfies formula \eqref{eq:xicfc}. Using formula
\eqref{eq:dhf}, as well as the form of the curvature tensor, we obtain 
\begin{eqnarray*}
0=d^2_hF^2=d_RF^2=d_{\xi\wedge J}F^2=\xi\wedge d_JF^2.
\end{eqnarray*}  
From the last formula we obtain that the semi-basic $1$-forms $\xi$
and $d_JF^2$ are proportional and hence there exists $\kappa \in
C^{\infty}(T_0M)$ such that $\xi=\kappa d_JF^2/2$. Hence, the
curvature tensor is given by 
\begin{eqnarray*} 
R=\frac{\kappa}{2}d_JF^2 \wedge J. 
\end{eqnarray*}
The curvature tensor satisfies the first Bianchi identity
$[J,R]=0$, which implies $d_J\xi \wedge J=0$. We take the trace of
this vector-valued $3$-form to obtain $(n-2)d_J\xi=0$, that gives $d_J\xi=0$, since we made
the assumption that $n>2$. From the expression of the curvature
$1$-form $\xi$, we obtain that $d_J\kappa=0$.

The curvature tensor also satisfies the second Bianchi identity
$[h,R]=0$, which yields $d_h\xi \wedge J=0$. Again we take the trace
to obtain $(n-2)d_h\xi=0$,  which in view of our assumption regarding
the dimension, gives $d_h\xi=0$. Using the expression of the curvature
$1$-form $\xi$, we obtain that $d_h\kappa=0$.

The two conditions $d_J\kappa=0$ and $d_h\kappa=0$ assures that
$\kappa$ is a constant, \cite[Theorem 9.4.11]{SLK14}. For this constant, the curvature tensor $R$
satisfies formula \eqref{eq:fc1} and therefore the Finsler metric has
constant sectional curvature $\kappa$.
\end{proof}

One can view Lemma \ref{lem:xicfc} as a reformulation, in dimension
$n\geq 3$, of the Finslerian version of Schur Lemma
\cite[Theorem 3.2]{BC19}, the expression \eqref{eq:xicfc} of the
curvature tensor contains two information: the geodesic spray is
isotropic and the curvature $1$-form $\xi$ satisfies $d_J\xi=0$. 

\subsection{Proof of Theorem \ref{thm:acf}}

We provide now the proof of Theorem \ref{thm:acf}. The simplest
implication is the sufficiency. We assume that the
Finsler metric has constant flag curvature. Then, using Lemma
\ref{lem:xicfc}, the curvature tensor is given by $R=\xi\wedge J$, for
some semi-basic $1$-form $\xi$. For any $\omega \in \Lambda^2(T_0M)$,
satisfying $i_J\omega=0$, we have 
\begin{eqnarray*}
i_R\omega=i_{\xi\wedge J}\omega = \xi \wedge i_J\omega=0. 
\end{eqnarray*}
   
We assume now that for a Finsler metric $F$, its curvature tensor $R$
satisfies the identity \eqref{eq:acf}, $i_R\omega=0$, $\forall \omega
\in \Lambda^2(T_0M)$ with $i_J\omega=0$. 

We first characterise the $2$-forms $\omega$ satisfying the condition
$i_J\omega=0$. For arbitrary vector fields $X, Y \in {\mathfrak
  X}(T_0M)$, it means 
\begin{eqnarray*}
\omega(JX, Y)+\omega(X, JY)=0.
\end{eqnarray*}
If $Y=JZ$ is a vertical vector field, it follows that $\omega(JX,
JZ)=0$. Therefore $\omega$ vanishes on any pair of vertical vector
fields (the vertical distribution is a Lagrangian distribution for
$\omega$). Consider $\{dx^i, \delta y^i:=dy^i + N^i_j dx^j\}$ a local
basis, adapted to the horizontal and vertical distributions. With
respect to this basis, a $2$-form $\omega$ satisfying $i_J\omega=0$
can be expressed as
\begin{eqnarray*}
\omega = a_{ij}dx^i\wedge dx^j + b_{ij}dx^i \wedge \delta
  y^j. \end{eqnarray*}
Since $i_J\omega = (b_{ij}-b_{ji}) dx^i\wedge dx^j$, the condition $i_J\omega=0$ implies
$b_{ij}=b_{ji}.$ We have now
\begin{eqnarray*}
i_R\omega = \left(b_{im}R^m_{jk} + b_{km}R^m_{ij} + b_{jm}R^m_{ki}
  \right) dx^i\wedge dx^j \wedge dx^k.
\end{eqnarray*} 
Hence, the identity \eqref{eq:acf} is satisfied if and only if 
\begin{eqnarray} 
b_{im}R^m_{jk} + b_{km}R^m_{ij} + b_{jm}R^m_{ki}=0, \label{eq:acf1}
\end{eqnarray}
for any symmetric tensor $b_{ij}$. The identity \eqref{eq:acf1} can be written in the following equivalent form 
\begin{eqnarray*}
b_{sl}\left(\delta^s_iR^l_{jk} + \delta^s_kR^l_{ij} +
  \delta^s_jR^l_{ki}  \right) =0, \quad \forall b_{sl}.
\end{eqnarray*}
The above identity implies that that the $(2,3)$ type tensor
$\delta^s_iR^l_{jk} + \delta^s_kR^l_{ij} + \delta^s_jR^l_{ki}$ is
skew-symmetric in the $2$ contravariant indices. This means 
\begin{eqnarray}
\delta^s_iR^l_{jk} + \delta^s_kR^l_{ij} +
  \delta^s_jR^l_{ki} + \delta^l_iR^s_{jk} + \delta^l_kR^s_{ij} +
  \delta^l_jR^s_{ki} = 0. \label{eq:acf2} 
\end{eqnarray}  
For the $(1,2)$-type curvature tensor $R$, we consider its trace, the
semi-basic $1$-form $(n-1)\xi_k:=R^s_{sk}=-R^s_{ks}$. In \eqref{eq:acf2}, if we take
the trace $i=s$, we obtain:
\begin{eqnarray*}
nR^l_{jk} + R^l_{kj} + R^l_{kj} + R^l_{jk} - (n-1)\delta^l_k\xi_j +
  (n-1) \delta^l_j \xi_k=0.
\end{eqnarray*}
Last formula can be written as 
 \begin{eqnarray*}
R^l_{jk}  = \xi_j \delta^l_k - \xi_k \delta^l_j,
\end{eqnarray*}
which means that the curvature tensor $R$ is given by formula
\eqref{eq:xicfc} and in view of Lemma \ref{lem:xicfc}, we obtain that the Finsler
metric has constant flag curvature. 

The above proof of the Theorem \ref{thm:acf} can be used to provide a
similar result for an arbitrary semi-basic, vector valued $2$-form $K$
on $T_0M$, not necessary the curvature tensor $R$.

\begin{prop} \label{prop1} A semi-basic, vector valued $2$-form $K$ on
  $T_0M$ satisfies the identity
\begin{eqnarray}
i_K\omega=0, \quad \forall \omega\in \Lambda^2(T_0M), \quad 
  i_J\omega=0 \label{iko}
\end{eqnarray}
if and only if there exists a semi-basic $1$-form $\xi \in
\Lambda^1(T_0M)$ such that 
\begin{eqnarray}
K=\xi\wedge J.  \label{kxij}
\end{eqnarray}
The semi-basic $1$-form $\xi=\xi_idx^i$ from
  formula \eqref{kxij}, if it exists, it is unique, being given by $(n-1)\xi_i=K^j_{ji}=-K^j_{ij}.$
\end{prop}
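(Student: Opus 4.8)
The plan is to reuse, almost verbatim, the algebraic portion of the proof of Theorem~\ref{thm:acf}, simply replacing the curvature tensor $R$ by an arbitrary semi-basic, vector valued $2$-form $K=K^l_{jk}\,\partial/\partial y^l\otimes dx^j\wedge dx^k$. The only genuine change is that we neither can nor need to invoke anything like Lemma~\ref{lem:xicfc}: the identity $K=\xi\wedge J$ we are after is exactly the pointwise-algebraic conclusion that the proof of Theorem~\ref{thm:acf} already reaches along the way. The implication \eqref{kxij}$\Rightarrow$\eqref{iko} is the easy one, copied from the sufficiency part of the proof of Theorem~\ref{thm:acf}: if $K=\xi\wedge J$, then $i_K\omega=i_{\xi\wedge J}\omega=\xi\wedge i_J\omega$ for every $\omega\in\Lambda^2(T_0M)$ (using the behaviour of the algebraic derivation $i_{*}$ on the wedge of a $1$-form with a vector valued $1$-form), so $i_J\omega=0$ forces $i_K\omega=0$.

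For the converse I would follow the same chain of reductions as in the proof of Theorem~\ref{thm:acf}. In an adapted local basis $\{dx^i,\delta y^i\}$, a $2$-form with $i_J\omega=0$ reads $\omega=a_{ij}\,dx^i\wedge dx^j+b_{ij}\,dx^i\wedge\delta y^j$ with $b_{ij}$ symmetric. Since $K$ is semi-basic and takes values in the vertical distribution, on which the $dx^i$ vanish, $i_K$ annihilates the $a_{ij}$-part, and the same computation as for $R$ gives $i_K\omega=(b_{im}K^m_{jk}+b_{km}K^m_{ij}+b_{jm}K^m_{ki})\,dx^i\wedge dx^j\wedge dx^k$. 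Hence \eqref{iko} is equivalent to $b_{im}K^m_{jk}+b_{km}K^m_{ij}+b_{jm}K^m_{ki}=0$ for all symmetric $b_{ij}$, the analogue of \eqref{eq:acf1}. Exactly as in the proof of Theorem~\ref{thm:acf}, this forces the $(2,3)$ type tensor $\delta^s_iK^l_{jk}+\delta^s_kK^l_{ij}+\delta^s_jK^l_{ki}$ to be skew-symmetric in the two upper indices $s,l$; writing out that skew-symmetry (the analogue of \eqref{eq:acf2}) and then contracting $i=s$ yields $K^l_{jk}=\xi_j\delta^l_k-\xi_k\delta^l_j$, which is precisely $K=\xi\wedge J$ with the semi-basic $1$-form $\xi=\xi_i\,dx^i$ given by $(n-1)\xi_i=K^j_{ji}=-K^j_{ij}$. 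The standing hypothesis $n=\dim M>2$ is used here to divide by $n-1$.

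Uniqueness of $\xi$ is then immediate: contracting the relation $K^l_{jk}=\xi_j\delta^l_k-\xi_k\delta^l_j$ on $l=j$ recovers the formula $(n-1)\xi_i=K^j_{ji}$, so $\xi$ is completely determined by $K$; equivalently, if $\xi\wedge J=\xi'\wedge J$ then $(\xi-\xi')\wedge J=0$ and taking the trace of this vector valued $2$-form gives $(n-1)(\xi-\xi')=0$, hence $\xi=\xi'$. I do not anticipate a real obstacle in this proof, since everything is the linear-algebra extraction already performed for $R$ in the proof of Theorem~\ref{thm:acf}. The two points that deserve a line of care are the vanishing of $i_K$ on the $dx^i\wedge dx^j$ part, which uses only that $K$ is vertical valued, and the bookkeeping in the contraction $i=s$, where one must respect the antisymmetry $K^l_{jk}=-K^l_{kj}$ and track the dimension-dependent coefficients.
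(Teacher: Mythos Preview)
Your proposal is correct and follows exactly the approach the paper intends: the paper does not give a separate proof of Proposition~\ref{prop1} but simply remarks that ``the above proof of the Theorem~\ref{thm:acf} can be used to provide a similar result for an arbitrary semi-basic, vector valued $2$-form $K$,'' which is precisely what you have spelled out. Your added remarks on why $i_K$ kills the $a_{ij}$-part and on the uniqueness of $\xi$ are welcome clarifications that the paper leaves implicit.
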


\subsection{A new proof of the Finslerian version of Beltrami's Theorem}

Two Finsler metrics $F$ and $\widetilde{F}$ are projectively related if their geodesics
coincide as unparameterised oriented curves, their geodesic sprays $S$
and $\widetilde{S}$ being related by $\widetilde{S} = S- 2P{\mathcal C}$. The function $P\in C^{\infty}(T_0M)$ is positively $1$-homogeneous in
the fiber coordinates and it is called the \textit{projective factor}.

We will use now the algebraic characterisation for Finsler metrics of
constant flag curvature given by Theorem \ref{thm:acf} to provide
another proof of the Finslerian version of Beltrami's Theorem.

\begin{thm}
Consider $F$ and $\widetilde{F}$ two projectively related Finsler
metrics and asume that one of them is of constant flag
curvature. Then, the other metric is of constant curvature as well if
and only if the projective factor $P$ satisfies the Hamel equation $d_hd_JP=0$.
\end{thm}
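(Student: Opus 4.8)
The plan is to translate the hypothesis ``one metric, say $\widetilde F$, has constant flag curvature'' into the statement that $\widetilde R = \widetilde\xi\wedge J$ for some semi-basic $1$-form $\widetilde\xi$ (Lemma \ref{lem:xicfc}), then compare $\widetilde R$ with $R$ via the known transformation rule for the curvature tensor under a projective change of spray, and finally apply Theorem \ref{thm:acf} to decide exactly when $R$ itself satisfies the algebraic identity \eqref{eq:acf}. The key input I would invoke is the classical formula relating the curvatures of two projectively related sprays $\widetilde S = S - 2P\mathcal C$: one has an identity of the schematic form
\begin{eqnarray*}
\widetilde R = R + \alpha\wedge J,
\end{eqnarray*}
where $\alpha$ is a semi-basic $1$-form built from $P$ and its horizontal/vertical derivatives (explicitly $\alpha = d_h d_J P - \text{(terms involving }P^2,\ d_JP\otimes d_JP,\ \mathcal{C}\text{-contractions)}$, i.e. $\alpha$ agrees with $d_hd_JP$ up to $J$-exact correction terms that are themselves of the form $\beta\wedge J$). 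The precise bookkeeping here is standard in the Finsler literature on projective metrizability, and I would either cite it or derive it quickly from $\widetilde h = h - $ (correction) and $R = \tfrac12[h,h]$.

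From there the argument is almost purely algebraic. Since $\widetilde R = \widetilde\xi\wedge J$, the transformation formula gives $R = (\widetilde\xi - \alpha)\wedge J + (\text{the }J\text{-exact correction terms})$, and grouping everything of the form $(\text{semi-basic }1\text{-form})\wedge J$ we get $R = \eta\wedge J + \gamma$, where $\gamma$ collects precisely the non-$J$-exact part coming from $\alpha$. Because every term of the type $\zeta\wedge J$ automatically satisfies $i_R\omega = \zeta\wedge i_J\omega = 0$ on $2$-forms with $i_J\omega=0$, Theorem \ref{thm:acf} tells us that $F$ has constant flag curvature if and only if $R = (\text{something})\wedge J$, i.e. if and only if $\gamma = 0$. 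The content of the theorem is then that $\gamma$ vanishes exactly when $d_hd_JP=0$: the ``quadratic in $P$'' correction terms are always of the form $\beta\wedge J$ (they are built by contracting $J$-type objects), so they never obstruct the identity, and the only genuinely non-$J$-exact piece of $\alpha$ is the Hamel term $d_hd_JP$ itself. I would make this last point precise by computing the trace $(n-1)\xi_i = R^j_{ji}$ as in the proof of Theorem \ref{thm:acf} and Proposition \ref{prop1}: $R$ has the form $\eta\wedge J$ iff $R^l_{jk} = \eta_j\delta^l_k - \eta_k\delta^l_j$ with $\eta$ the normalized trace, and substituting the transformation formula this reduces to the symmetric part of the Hessian-type object $d_hd_JP$ vanishing, which for a $1$-homogeneous $P$ is equivalent to $d_hd_JP=0$ (using $d_J\mathcal{L}_{\mathcal C}P = 0$ and homogeneity to kill the would-be skew part).

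The main obstacle I anticipate is \emph{isolating cleanly which parts of the curvature transformation formula are of the form $\beta\wedge J$ and which are not}. The raw formula for $\widetilde R - R$ under a projective change contains several terms, and it is not visually obvious that all of them except $d_hd_JP\wedge J$ collapse into the ``harmless'' $\zeta\wedge J$ family; making that reduction rigorous requires careful use of the homogeneity of $P$, the identities $d_J^2 = 0$, $[J,J]=0$, $i_{\mathcal C}J = \mathcal C$ (hence $i_J(d_JP) = \mathcal{C}(P)$ type simplifications), and the Bianchi identities $[J,R]=0$, $[h,R]=0$ satisfied by both $R$ and $\widetilde R$. Once that algebraic separation is done, the biconditional with $d_hd_JP=0$ drops out of Theorem \ref{thm:acf} essentially for free, and one also recovers as a byproduct that the resulting constant curvature of $F$ equals that of $\widetilde F$ (the projective change preserves the constant, which is expected). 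A secondary subtlety is the homogeneity/degree bookkeeping needed to conclude that the \emph{skew} part of $d_hd_JP$ being zero is automatic, so that $d_hd_JP=0$ as a $2$-form is the right formulation rather than just ``its symmetric part vanishes''; this is where the assumption $\dim M > 2$ and the trace argument (the $(n-2)$ factors appearing in the proof of Lemma \ref{lem:xicfc}) will again be used.
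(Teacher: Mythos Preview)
Your overall strategy --- use Lemma~\ref{lem:xicfc} to encode constant flag curvature as $R=\xi\wedge J$, invoke the curvature transformation formula under a projective change, and then read off the obstruction via Theorem~\ref{thm:acf} --- is exactly the paper's approach. The gap is that you never write down the transformation formula, and your guesses about its shape are wrong in a way that matters.

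The correct formula (which the paper cites from \cite{BM12}) is
\[
\widetilde R \;=\; R + \eta\wedge J \;-\; d_J\eta\otimes\mathcal C,\qquad \eta = P\,d_JP - d_hP,
\]
with $\eta$ a semi-basic $1$-form. Two things in your proposal conflict with this. First, you write ``schematically $\widetilde R = R + \alpha\wedge J$'' and then try to locate the obstruction as some ``non-$J$-exact piece of $\alpha$''; but anything of the form $\alpha\wedge J$ is automatically harmless for \eqref{eq:acf}, so the obstruction cannot live there. It is the separate term $d_J\eta\otimes\mathcal C$, which is \emph{not} of wedge-with-$J$ type, that carries the content. Second, you write ``$\alpha = d_hd_JP - (\ldots)$'' with $\alpha$ a $1$-form, but $d_hd_JP$ is a $2$-form; this type mismatch signals that the decomposition you have in mind is not the right one. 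In fact the quadratic term $P\,d_JP$ and the linear term $d_hP$ both sit inside the harmless $\eta\wedge J$ piece; the only obstruction is $d_J\eta$, and a two-line computation ($d_JP\wedge d_JP=0$, $d_J^2=0$, and the commutator of $d_h$ and $d_J$) gives $d_J\eta = d_hd_JP$.

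Once you have the correct formula the proof is one line, exactly as in the paper: if $F$ has constant flag curvature then $R=\xi\wedge J$, so $\widetilde R = (\xi+\eta)\wedge J - d_J\eta\otimes\mathcal C$, and by Lemma~\ref{lem:xicfc} (or Theorem~\ref{thm:acf}) $\widetilde F$ has constant flag curvature iff $d_J\eta=0$, i.e.\ iff $d_hd_JP=0$. Your closing paragraph about the ``skew part'' of $d_hd_JP$, homogeneity bookkeeping, and the $(n-2)$ trace factors is unnecessary: $d_hd_JP$ is already a $2$-form and the condition $d_J\eta=0$ is exactly what is needed, with no symmetric/skew splitting to perform.
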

\begin{proof}
For two projectively related Finsler metrics $F$ and $\widetilde{F}$,
their curvature tensors $R$ and $\widetilde{R}$ are related by \cite[(4.8)]{BM12}
\begin{eqnarray}
\widetilde{R}=R + \eta\wedge J - d_J\eta \otimes {\mathcal C}, \quad
  \eta=Pd_JP - d_hP. \label{eq:rrt} \end{eqnarray} 
We assume that the Finsler metric $F$ has constant flag
curvature and hence we obtain that the curvature
tensor $R$ is given by $R=\xi \wedge J$. Then
formula \eqref{eq:rrt} can be written as follows
\begin{eqnarray}
\widetilde{R}=\left(\xi + \eta\right)\wedge J - d_J\eta \otimes {\mathcal C}. \label{eq:rrt1} \end{eqnarray} 
Using this form of the curvature tensor $\widetilde{R}$ and Theorem
\ref{thm:acf}, we obtain that $\widetilde{F}$ has
constant curvature if and only if   $d_J\eta=0$. 
Since $d_J\eta=d_hd_JP$, we have that $\widetilde{F}$ has
constant flag curvature if and only if the projective factor $P$ satisfies
Hamel's equation $d_hd_JP=0$.
\end{proof}

%\subsection*{Acknowledgments} 
%This work has been supported by Ministry of
%Research and Innovation within Program 1 -- Development of the national
%RD system, Subprogram 1.2 – Institutional Performance -- RDI excellence
%funding projects, Contract no.34PFE/19.10.2018.


\begin{thebibliography}{9}
\bibitem{AZ06}{Akbar-Zadeh, H.}: \emph{Initiation to global Finslerian
  geometry}, Elsevier, North-Holland Mathematical Library, 2006.

\bibitem{BC18} Bucataru I., Cre\c tu G.: \emph{A characterisation for
    Finsler metrics of constant curvature and a Finslerian version of
    Beltrami theorem},  J. Geom. Anal.,  
DOI: 10.1007/s12220-019-00158-7, arXiv:1808.05001v2.

\bibitem{BC19} Bucataru I., Cre\c tu G.: \emph{Finsler spaces of
    constant flag curvature and their projective geometry},
  arXiv:1902.05274.

\bibitem{BM11} Bucataru I., Muzsnay Z.: \emph{Projective Metrizability
    and Formal Integrability}, Symmetry, Integrability and Geometry:
  Methods and Applications, \textbf{7}(2011), 114.

\bibitem{BM12} Bucataru I., Muzsnay Z.: \emph{Projective and Finsler
    metrizability: parameterization-rigidity of geodesics},
  Int. J. Math., \textbf{23} (6) (2012), 1250099.

\bibitem{Grifone72}  Grifone, J.: \emph{Structure presque-tangente et
    connexions I}, Ann. Inst. Fourier, \textbf{22} (1972), 287--334.

\bibitem{GM00} Grifone J., Muzsnay Z.: \emph{Variational Principles For
    Second-Order Differential Equations}, World Scientific, 2000.

\bibitem{MM12} {Mantica, C.A., Molinari, L.G.:} \emph{Extended
    Derdzi\'nski-Shen theorem for curvature tensors}, Colloq. Math.,
  \textbf{28} (1) (2012), 1--6.   

\bibitem{MM19} {Mantica, C.A., Molinari, L.G.:} \emph{The Jordan
    algebras of Riemann, Weyl and curvature compatible tensors}, arXiv:1910.03929. 
 
%\bibitem{Mat18} Matveev V.: \emph{Projectively Invariant Objects and the Index
%of the Group of Affine Transformations in the Group
%of Projective Transformations},
%Bull. Iran. Math. Soc. \textbf{44} (2018),  341-–375.

\bibitem{Sh01}  Shen Z.: \emph{Differential geometry of spray and Finsler spaces}, Springer, 2001

\bibitem{SM86}{Sinha, B.B., Matharoo, A.S.}: \emph{On Finsler spaces
    of constant curvature}, Indian J. Pure Appl. Math.,
  \textbf{17} (1) (1986), 66--73.

\bibitem{SLK14} {Szilasi, J., Lovas, R., Kert\'esz, D.:}
  \emph{Connections, sprays and Finsler structures}, World Scientific, 2014.

\end{thebibliography}
\end{document}